%% This document created by Scientific Word (R) Version 3.0

\documentclass[12pt]{amsart}%
\usepackage{graphicx}
\usepackage{amsmath}
\usepackage{amsfonts}
\usepackage{amssymb}%1
\usepackage[mathscr]{eucal}
\setcounter{MaxMatrixCols}{30}

\oddsidemargin  0.25in
\evensidemargin 0.25in
\topmargin -0.2in
\setlength{\textwidth}{6in}
\setlength{\textheight}{9in}

\makeatletter
\def\numberwithin#1#2{\@ifundefined{c@#1}{\@nocnterrr}{%
  \@ifundefined{c@#2}{\@nocnterr}{%
  \@addtoreset{#1}{#2}%
  \toks@\expandafter\expandafter\expandafter{\csname the#1\endcsname}%
  \expandafter\xdef\csname the#1\endcsname
    {\expandafter\noexpand\csname the#2\endcsname
     .\the\toks@}}}}
\makeatother
\numberwithin{equation}{section}

\newtheorem{theorem}{Theorem}
\numberwithin{theorem}{section}

%%%%%%%%%%%%%%%%%%%%%%%%%%%%%%%%%%%%%%%%%
%%%%%%%%%%%%%%%%%%%%%%%%%%%%%%%%%%%%%%%%%
%%%%%%%%%%%%%%%%%%%%%%%%%%%%%%%%%%%%%%%%%
%%%%%%%%%%%%%%%%%%%%%%%%%%%%%%%%%%%%%%%%%
%%%   \textheight 8in
%%%   \textwidth 5.5 in
%%%   \voffset -0.3in
%%%   \hoffset -0.6in

%TCIDATA{OutputFilter=latex2.dll}
%TCIDATA{Version=4.00.0.2321}
%TCIDATA{CSTFile=LaTeX article (bright).cst}
%TCIDATA{Created=Sun Oct 28 10:03:53 2001}
%TCIDATA{LastRevised=Tuesday, February 10, 2004 18:42:27}
%TCIDATA{<META NAME="GraphicsSave" CONTENT="32">}
%TCIDATA{<META NAME="DocumentShell" CONTENT="General\Blank Document">}
%TCIDATA{Language=American English}
%%% \newtheorem{theorem}{Theorem}

   %%% \newtheorem{acknowledgement}[equation]{Acknowledgement}

  %%% \newtheorem{algorithm}[theorem]{Algorithm}

  %%%   \newtheorem{axiom}[theorem]{Axiom}
   
    %%% \newtheorem{case}[theorem]{Case}

  %%% \newtheorem{claim}[theorem]{Claim}

  %%% \newtheorem{conclusion}[theorem]{Conclusion}

   %%%   \newtheorem{condition}[theorem]{Condition}

  %%% \newtheorem{conjecture}[theorem]{Conjecture}

  %%% \newtheorem{criterion}[theorem]{Criterion}

%%%   \newtheorem{exercise}[theorem]{Exercise}
\newtheorem{lemma}[theorem]{Lemma}

\newtheorem{proposition}[theorem]{Proposition}
\newtheorem{remark}[theorem]{Remark}
\begin{document}

\title{New proofs of Perelman's theorem on shrinking Breathers in Ricci flow}
%%% \date{}

\author{Peng Lu}
\address{Department of Mathematics, University of Oregon, Eugene, OR 97403, USA}
\email{penglu@uoregon.edu}
\thanks{P.L. is partially supported by Simons Foundation through Collaboration Grant 229727.}

  \author{Yu Zheng}
   \address{Department of Mathematics, East China Normal University, Shanghai, PR China}
  \email{zhyu@math.ecnu.edu.cn}
\thanks{ Y.Z. is partially supported by CNSF through Grant 11671141. }

%% \date{\small revised \today}

\begin{abstract}
We give two new proofs of Perelman's theorem that shrinking breathers of Ricci flow
on closed manifolds are gradient Ricci solitons, using the fact that  the singularity models
of type I solutions are shrinking gradient Ricci solitons and the fact that  non-collapsed type I 
ancient solutions have rescaled limits being shrinking gradient Ricci solitons.

\smallskip

\noindent \textbf{Keywords}. Ricci flow, Shrinking breathers, Type I solutions,  gradient Ricci solitons

\smallskip
\noindent \textbf{MSC (2010)}. 53C44
\end{abstract}

\maketitle

%%%%%%%%%%%%%%%%%%%%%%%%%%%%%%%%%%%%%%%%%%%%
%%%%%%%%%%%%%%%%%%%%%%%%%%%%%%%%%%%%%%%%%%%%
%%%%%%%%%%%%%%%%%%%%%%%%%%%%%%%%%%%%%%%%%%%%
\section{\bf Introduction}

In his pioneer paper \cite{Pe02I}, as an application of the  $W$-entropy, G. Perelman 
proves that on closed manifolds the shrinking breathers are shrinking  GRS
 (short for gradient Ricci solitons). 
In this article we give two new proofs of the Perelman's theorem for shrinking breathers
 from the perspective of singularity 
analysis (Theorem \ref{thm main shrinking T}(i) and \ref{thm main shrinking -infty}(i)). 
More precisely,  for shrinking breathers we will construct ancient solutions of Ricci flow which
have type I singularities at finite singular time and $-\infty$,  then we use either the fact that the singularity 
models of type I solutions are shrinking GRS (A. Naber \cite{Na10}, J. Enders,
R. Muller, and P. Topping  \cite{EMT11}), or the fact that non-collapsed type I ancient solutions have
 rescaled limits being shrinking GRS (X.D. Cao and Q. Zhang \cite{CZ11}), to finish the proof.
We can also say a little bit about steady breathers on noncompact manifolds (Proposition
\ref{prop steady noncompact}).

This idea of extending breathers to exist on a larger time interval has been used by 
M. Feldman, T. Ilmanen, and L. Ni (see the Remark in 
\cite[p.53]{FIN05}). They actually use a result of R. Hamilton about asymptotic limit at time $\infty$ 
to give an alternative proof that there is no non-trivial expanding breather on closed manifolds
(\cite{Iv93}, \cite{Pe02I}).

When $M$ is noncompact, under some extra assumption our method also proves 
that shrinking breathers are GRS (Theorem \ref{thm main shrinking T}(ii) and 
\ref{thm main shrinking -infty}(ii)). 
Note that in \cite{Zh14} Q. Zhang proves a result about shrinking breathers on complete noncompact 
manifolds by considering $W$-entropy, which has different assumptions from ours.
Also note that $W$-entropy on noncompact manifolds is used by M. Rimondi and G. Veronelli \cite{RV16}
to prove a result about when a complete noncompact Ricci soliton is a GRS. 

\vskip .2cm
\noindent \textbf{Acknowledgement}. 
P.L. wants to thank Professor Li, jiayu and School of Mathematical Sciences at the University of Science and 
Technology of China, where part of this work is carried out, for their warm hospitality 
during spring, 2017.

%%%%%%%%%%%%%%%%%%%%%%%%%%%%%%%%%%%%%%%%%%%%%%
%%%%%%%%%%%%%%%%%%%%%%%%%%%%%%%%%%%%%%%%%%%%%%
\section{\bf Construction of ancient solutions}

Let $(M^n, g(t)), \, t \in (\alpha,\beta)$, be a Ricci flow.
$g(t)$ is called a breather if for some $\tilde{t}_0, \tilde{t}_1 \in (\alpha,\beta)$ with $\tilde{t}_0 
< \tilde{t}_1$ there is a constant $\alpha >0$ and a diffeomorphism $\varphi: M \rightarrow M$
such that the metrics $g(\tilde{t}_1)= \alpha \varphi^* g(\tilde{t}_0)$.
The cases $\alpha < 1, \alpha = 1$, and $ \alpha > 1$ correspond to shrinking, steady, and
expanding breathers, respectively.

\begin{lemma} \label{lem extend sol of RF}
Given a breather $(M^n, g(t))$ we have the following.

\noindent $($i$)$ If the  breather is shrinking, there is an ancient solution of Ricci flow $(M^n,G(t))$
such that $G(t) =g(t)$ when $t \in (\alpha,\beta)$.

\noindent $($ii$)$ If the breather is steady, there is an eternal solution of Ricci flow $(M^n,G(t))$
such that $G(t) =g(t)$ when $t \in (\alpha,\beta)$.

\noindent $($iii$)$ If the breather is expanding, there is an immortal solution of Ricci flow $(M^n,G(t))$
such that $G(t) =g(t)$ when $t \in (\alpha,\beta)$.
\end{lemma}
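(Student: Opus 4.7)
The plan is to exploit the scaling and diffeomorphism invariance of the Ricci flow equation: for any $\lambda>0$, any diffeomorphism $\psi$, and any constants $a,b$, the operation $\tilde g(t):=\lambda\,\psi^{*}g\bigl(\lambda^{-1}(t-a)+b\bigr)$ sends Ricci flows to Ricci flows. The idea is to use the breather identity $g(\tilde{t}_1)=\alpha\varphi^{*}g(\tilde{t}_0)$ as a matching condition in order to splice together rescaled pullback copies of $g(t)$ and extend it beyond its given time interval.

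For part (i) (shrinking, $\alpha<1$) I would consider the one-step backward extension
\[
\bar g(t):=\alpha^{-1}(\varphi^{-1})^{*}\,g\bigl(\alpha(t-\tilde{t}_0)+\tilde{t}_1\bigr),
\]
which is a Ricci flow on its time domain by the invariance above. The breather identity, together with $\varphi^{*}\circ(\varphi^{-1})^{*}=\mathrm{id}$, gives $\bar g(\tilde{t}_0)=g(\tilde{t}_0)$; and the time domain of $\bar g$, being an $\alpha^{-1}$-dilation of the original, extends strictly further into the past than the original interval. Near $\tilde{t}_0$ both $g$ and $\bar g$ are Ricci flows on a two-sided neighborhood with a common value at $\tilde{t}_0$, so by uniqueness of Ricci flow with prescribed initial data (Hamilton, for closed manifolds; a bounded-curvature hypothesis suffices in the noncompact setting) they agree on their overlap, and the two pieces glue to a single smooth Ricci flow on an enlarged interval. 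Iterating the construction, the past endpoint after $k$ steps is pushed back by the partial sums of the geometric series $T\sum_{j\ge 1}\alpha^{-j}$ with $T:=\tilde{t}_1-\tilde{t}_0>0$. Since $\alpha^{-1}>1$ the series diverges, and the direct limit is the desired ancient Ricci flow $G$ on $(-\infty,\beta)$.

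Parts (ii) and (iii) are strictly parallel. For the steady case $\alpha=1$ the analogous one-step extensions in either time direction add the constant amount $T$ per iteration, so iterating in both directions produces an eternal solution on $\mathbb{R}$. For the expanding case $\alpha>1$ one runs the mirror forward extension
\[
\tilde g(t):=\alpha\,\varphi^{*}\,g\bigl(\alpha^{-1}(t-\tilde{t}_1)+\tilde{t}_0\bigr);
\]
its forward increments grow like $\alpha^{k}T$, their sum diverges to $+\infty$, and the direct limit is an immortal solution on $(\alpha,\infty)$.

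The main obstacle I foresee is justifying the smoothness of the glued flow across each iteration boundary, since a priori one only has $C^{0}$ matching. Fortunately, at any such boundary the two candidate flows are each manufactured from the original $g$ on an open time interval, so they separately extend to Ricci flows on a full two-sided neighborhood of the gluing instant; smoothness of the concatenation then reduces to uniqueness of Ricci flow with common value at one time. This is the sole place where compactness of $M$ (or a substitute assumption) enters, which is consistent with the extra hypotheses needed for the noncompact results alluded to in the introduction.
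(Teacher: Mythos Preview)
Your argument is correct and is essentially the paper's own: iteratively splice rescaled pullback copies of $g$ using the breather identity as the matching condition, and observe that the resulting geometric series in the time-increments diverges in the relevant direction. Two minor remarks: the paper extends in \emph{both} time directions (so that in the shrinking case $G$ lives on $(-\infty,\tilde t_0+\tfrac{1}{1-\alpha}(\tilde t_1-\tilde t_0))$, which is needed later for the Type~I blow-up at the finite endpoint), and for the smooth gluing the paper notes that one can bypass uniqueness entirely by checking directly that all time-derivatives of the two pieces agree at each $\tilde t_j$ (since $\partial_t^k G$ at a given instant is a universal differential expression in the metric at that instant), which dissolves your compactness concern.
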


\begin{proof}
Let $G_0(t) = g(t)$ for $t \in [ \tilde{t}_0, \tilde{t}_1]$. First we define 
$G_1(t)=\alpha \cdot \varphi^* G_0(\alpha^{-1} (t- \tilde{t}_1)+ \tilde{t}_0)$ for
 $t \in [ \tilde{t}_1, \tilde{t}_2]$
where $\tilde{t}_2 = \tilde{t}_1 + \alpha(\tilde{t}_1-\tilde{t}_0)$. 
It is clear that $G_1(t)$ is a solution of Ricci flow. 
Below  we will use the convention that $\frac{1-\alpha^j}{1-\alpha} =j$ when $\alpha =1$.
Inductively  for $j \geq 1$ we define
\[
 G_{j+1}(t)=\alpha \cdot \varphi^* G_j(\alpha^{-1} (t- \tilde{t}_{j+1})+ \tilde{t}_{j}), \quad
t \in [ \tilde{t}_{j+1},  \tilde{t}_{j+2}], 
\]
where  $\tilde{t}_{j+2} = \tilde{t}_{j+1} + \alpha(\tilde{t}_{j+1}-\tilde{t}_j)$,
and for $ j \leq 0$ we define  
\[
G_{j -1}(t)=\alpha^{-1} \cdot (\varphi^{-1})^* G_j (\alpha (t- \tilde{t}_{j-1})+ \tilde{t}_j), 
\quad t \in [ \tilde{t}_{j-1},  \tilde{t}_j],
\]
where $\tilde{t}_{j -1} = \tilde{t}_{j} - \alpha^{-1}(\tilde{t}_{ j+1}-\tilde{t}_j)$.

The following statements follow from some easy calculations. 
For $j \geq 2$ we have $\tilde{t}_j =\tilde{t}_0 +\frac{1-\alpha^j}{1-\alpha} (\tilde{t}_1- \tilde{t}_0) $
and  for $j \geq 1$ we have $\tilde{t}_{-j} =\tilde{t}_1 - \frac{\alpha^{-j-1}-1}{\alpha^{-1}-1} 
(\tilde{t}_1- \tilde{t}_0) $.
  For $j \geq 1$
\begin{align*}
& G_j(t) = \alpha^j \cdot (\varphi^j)^* g \left (\alpha^{-j} \left (t - \tilde{t}_0 - \frac{1-\alpha^{j}}
{1-\alpha}  (\tilde{t}_1- \tilde{t}_0) \right)+ \tilde{t}_0 \right ),   \\
& G_{-j}(t) = \alpha^{-j} \cdot (\varphi^{-j})^* g \left (\alpha^j \left (t - \tilde{t}_1 +\frac{\alpha^{-(j+1)
} -1 }{\alpha^{-1} -1} (\tilde{t}_1- \tilde{t}_0) \right)+ \tilde{t}_0 \right ).
\end{align*}

We define a family of metrics $G(t)$ by piecing  $G_j(t), j \in \mathbb{Z}$, all together. 
By the uniqueness of solutions of Ricci flow or more directly by the fact that the left and right 
time-derivatives of $G(t)$ are matching at each $\tilde{t}_j$,
 we conclude that $G(t)$ is a smooth solution and that $G(t)=g(t)$ for $t \in (\alpha,\beta)$.

\vskip .1cm
\noindent $($i$)$  For $\alpha < 1$, $G(t)$ is defined on $(-\infty , \tilde{t}_0 +\frac{1}{1-\alpha} 
(\tilde{t}_1- \tilde{t}_0))$, which is an ancient solution.

\vskip .1cm
\noindent $($ii$)$ For $\alpha =1$, $G(t)$ is defined on $(-\infty, \infty)$, 
which is an eternal solution.

\vskip .1cm
\noindent $($iii$)$ For $\alpha >1$, $G(t)$ is defined on $(\tilde{t}_1 -\frac{1}{1-\alpha^{-1}}(
\tilde{t}_1 - \tilde{t}_0) , \infty)$, which is an immortal solution.
\end{proof}

Let $\kappa >0$ be a constant.
We call that Ricci flow $(N^n,h(t)), t \in [\alpha, \beta]$, is  $\kappa$-non-collapsed on all scales
if for any point $(p_*, {t}_*) \in N \times  [\alpha, \beta] $ and $r_*>0$
which satisfy $| \operatorname{Rm}|_h \leq r_*^{-2}$ on parabolic ball
 $ \cup_{t \in [\max \{t_* -r_*^2, \alpha \},
t_*]} \left ( B_{h(t)}(p_*, r_*) \times \{t \} \right )$, 
the volume $\operatorname{Vol}_{h(t_*)} B_{h(t_*)}(p_*, r_*)
\geq \kappa r_*^n$.  In this definition we allow $\alpha = -\infty$.
When $\alpha =-\infty$ the definition agrees with \cite[Def. 1.1]{CZ11}.

%%%%
\begin{lemma} \label{lem G type I}
Let $(M^n, g(t))$ be a nonflat shrinking breather with $g(\tilde{t}_1)= \alpha \varphi^* g(\tilde{t}_0)$
for some $\alpha \in (0,1)$ and $ \tilde{t}_0 < \tilde{t}_1$,
and let $G(t)$ be the solution  constructed in Lemma \ref{lem extend sol of RF}$($i$)$.
Assume  curvature bound $\sup_{(x,t) \in M \times [\tilde{t}_0, \tilde{t}_1]} |
\operatorname{Rm}|_{g}(x,t) < \infty$.
 Then

\vskip .1cm
\noindent $($i$)$ $G(t)$ develops a Type I singularity at time $T =\tilde{t}_0 +\frac{1}{1-\alpha}
 (\tilde{t}_1- \tilde{t}_0)$.

\vskip .1cm
\noindent $($ii$)$  $G(t)$ develops a Type I singularity at time $-\infty$, i.e., 
$\sup_{(x,t) \in M \times (-\infty, \tilde{t}_0]} |t| |\operatorname{Rm}|_{G}$ $(x,t) < \infty$.

\vskip .1cm
\noindent $($iii$)$  If $g(t), \, t \in [\tilde{t}_0, \tilde{t}_1]$, is $\kappa$-non-collapsed on all scales
for some $\kappa >0$,
 then $G(t), \, t \in (- \infty, \tilde{t}_0]$, is $\kappa$-non-collapsed on all scales.
\end{lemma}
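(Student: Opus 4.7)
The proof hinges on the rescaling structure made explicit in Lemma 2.2. I would first rewrite those formulas in the unified form
\[
G(t) = \alpha^{-k}(\varphi^{-k})^{*} g\bigl(\alpha^{k}(t-\tilde{t}_{-k})+\tilde{t}_0\bigr),\qquad t \in [\tilde{t}_{-k},\tilde{t}_{-k+1}],\ k\in\mathbb{Z},
\]
which is immediate by direct substitution. The Ricci-flow rescaling identity
\[
|\operatorname{Rm}|_G(x,t) = \alpha^{k}|\operatorname{Rm}|_g(\varphi^{-k}(x),s),\qquad s := \alpha^{k}(t-\tilde{t}_{-k})+\tilde{t}_0 \in [\tilde{t}_0,\tilde{t}_1],
\]
then follows at once. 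Writing $C := \sup_{M\times[\tilde{t}_0,\tilde{t}_1]}|\operatorname{Rm}|_g < \infty$, this gives $|\operatorname{Rm}|_G \leq C\alpha^{k}$ on the $k$-th piece.

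For (i), from the recursion one computes $T-\tilde{t}_j = \frac{\alpha^j}{1-\alpha}(\tilde{t}_1-\tilde{t}_0)$, so on each forward piece ($k = -j$, $j \geq 0$),
\[
(T-t)|\operatorname{Rm}|_G(x,t) \leq \tfrac{\alpha^j}{1-\alpha}(\tilde{t}_1-\tilde{t}_0)\cdot\alpha^{-j}C = \tfrac{C(\tilde{t}_1-\tilde{t}_0)}{1-\alpha},
\]
independent of $j$; for $t \leq \tilde{t}_0$ the product is bounded simply because $|\operatorname{Rm}|_G$ is. Part (ii) is the mirror image: on $[\tilde{t}_{-j},\tilde{t}_{-j+1}]$ the decay $|\operatorname{Rm}|_G \leq C\alpha^{j}$ cancels the growth of $|t|$, which is $O(\alpha^{-j})$ by the explicit formula $\tilde{t}_{-j} = \tilde{t}_1 - \frac{\alpha^{-j-1}-1}{\alpha^{-1}-1}(\tilde{t}_1-\tilde{t}_0)$, giving a $j$-independent bound on $|t||\operatorname{Rm}|_G$; finite-slab boundedness handles the remaining finite part of $(-\infty,\tilde{t}_0]$.

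For (iii), the plan is to transport the $G$-parabolic ball to a parabolic ball for $g$ via the discrete scaling symmetry. Given $(p_*,t_*) \in M\times(-\infty,\tilde{t}_0]$ and $r_*>0$ satisfying the curvature hypothesis, pick the unique $k \geq 0$ with $t_* \in [\tilde{t}_{-k},\tilde{t}_{-k+1}]$ and set $s_* := \alpha^{k}(t_*-\tilde{t}_{-k})+\tilde{t}_0 \in [\tilde{t}_0,\tilde{t}_1]$, $\tilde{p} := \varphi^{-k}(p_*)$, and $\tilde{r} := \alpha^{k/2}r_*$. The unified identity says precisely that $\varphi^{-k}:(M,G(t))\to(M,\alpha^{-k}g(s))$ is an isometry at every pair of corresponding times, so balls, curvature bounds, and volumes transform consistently: the $G$-parabolic ball maps to a parabolic ball for $g$ at $(\tilde{p},s_*)$ of radius $\tilde{r}$, the bound $r_*^{-2}$ becomes $\tilde{r}^{-2}$, and $\operatorname{Vol}_{G(t_*)}B_{G(t_*)}(p_*,r_*) = \alpha^{-kn/2}\operatorname{Vol}_{g(s_*)}B_{g(s_*)}(\tilde{p},\tilde{r})$. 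The hypothesized non-collapsing of $g$ then produces the lower bound $\kappa \tilde{r}^n$, which rescales back to exactly $\kappa r_*^n$ for $G$.

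The main obstacle --- really the only subtle point --- is the case $t_* - r_*^2 < \tilde{t}_{-k}$, in which the transported parabolic ball extends past $\tilde{t}_0$ into the past of $g$'s domain. This is exactly the scenario the ``$\max$'' clipping in the paper's definition of $\kappa$-non-collapsing is designed for: the hypothesis on $g$ only requires the curvature bound on $\cup_{s \in [\max\{s_*-\tilde{r}^2,\tilde{t}_0\},s_*]}B_{g(s)}(\tilde{p},\tilde{r})$, and this clipped region is the image of a sub-portion of the original $G$-parabolic ball, so the bound $|\operatorname{Rm}|_g \leq \tilde{r}^{-2}$ is inherited automatically. Once this observation is in place, (iii) is routine.
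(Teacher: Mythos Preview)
Your proof is correct and follows essentially the same scaling argument as the paper's own proof of the lemma. One small omission: in part~(i) you establish only the Type~I upper bound $(T-t)|\operatorname{Rm}|_G \leq C(\tilde t_1-\tilde t_0)/(1-\alpha)$ but do not explicitly verify that the curvature actually blows up at $T$; the paper handles this by observing (via the nonflatness hypothesis, so $K>0$) that $\sup_{x\in M}|\operatorname{Rm}|_G(x,\cdot)=\alpha^{-j}K\to\infty$ on successive forward pieces.
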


\begin{proof}
Define 
\[
K = \sup_{(x,t) \in M \times [\tilde{t}_0, \tilde{t}_1]} |\operatorname{Rm}|_{g}(x,t) \in (0, \infty).
\]

 %%%
 \vskip .1cm
  $($i$)$ For $j \geq 1$ and $t \in  [ \tilde{t}_j, \tilde{t}_{j+1}]$
   we have
\begin{align}
&  \sup_{x \in M } |\operatorname{Rm}|_{G_j}(x,t) \leq \alpha^{-j} K \leq 
 \frac{ (1-\alpha)^{-1}(\tilde{t}_1 - \tilde{t}_0)K }{T-t}  ,  \label{eq 1a}\\
&\sup_{ t \in  [ \tilde{t}_j, \tilde{t}_{j+1}]}  \sup_{x \in M } |\operatorname{Rm}|_{G_j}(x,t) 
= \alpha^{-j} K . \label{eq 1b}
 \end{align}
Equation (\ref{eq 1b}) implies that $T$ is a singular time. It follows from
(\ref{eq 1a}) that
 \[
  \sup_{x \in M } |\operatorname{Rm}|_{G}(x,t) \leq  \frac{ (1-\alpha)^{-1}
  (\tilde{t}_1 - \tilde{t}_0)K }{T-t}
  \]
 for $t \in [ \tilde{t}_1, T)$, and $G(t)$ develops a Type I singularity at time $T$.

 %%%
 \vskip .1cm
$($ii$)$ For $j \geq 1$ and $t \in ( \tilde{t}_{-j}, \tilde{t}_{-j+1}]$ we have
\[
 \sup_{x \in M } |\operatorname{Rm}|_{G_{-j}}(x,t) \leq \alpha^{j} K,
 \]
and for $j$ large enough we have
 \[
 \frac{1}{|t|} \geq \frac{1}{|  \tilde{t}_{-j}|}= \frac{1}{ | \tilde{t}_1 - 
 \frac{\alpha^{-(j+1)} -1}{ \alpha^{-1} -1} (\tilde{t}_1 - \tilde{t}_0)  |} \geq 
 \frac{ \alpha^{-1} -1 }{2(\tilde{t}_1 - \tilde{t}_0)} \cdot \alpha^{j+1}.
 \]
 Hence 
 \[
  \sup_{x \in M } |\operatorname{Rm}|_{G}(x,t) \leq  \frac{ 2 (\tilde{t}_1 - \tilde{t}_0)K }{ 
  (1- \alpha)|t|}
  \]
 for $t$ sufficiently close to $-\infty$,  and $G(t)$ develops a Type I singularity at time $-\infty$.

 %%%
 \vskip .1cm
 $($iii$)$ For any point $(p_*, {t}_*) \in M \times (-\infty, \tilde{t}_0]$, there is a $j_* \geq 1$ such that
 $ {t}_* \in ( \tilde{t}_{-j_*},  \tilde{t}_{-j_* +1}  ]$.
If there is a $r_*>0$ such that  $| \operatorname{Rm}|_G(x,t) \leq r_*^{-2}$ for $ (x,t) \in
 \cup_{t \in [t_* -r_*^2,
t_*]} $ $  \left ( B_{G(t)}(p_*, r_*) \times \{t \} \right )$, by the definition of $G_{-j_*}(t)$ and 
the curvature scaling we have  $ | \operatorname{Rm}|_g \leq  \alpha ^{-j_*}r_*^{-2} $ on 
 \[
  \cup_{t \in [\max \{ \bar{t}_* - \alpha ^{j_*} r_*^{2}, \tilde{t}_0 \}, \bar{t}_*]} ( B_{g(t)}
 (\varphi^{-j_*}(p_*),  \alpha ^{j_* /2}r_*) \times \{t \} ),
 \]
 where $\bar{t}_* =\alpha^{ j_*}(t_* - \tilde{t}_1)+ \tilde{t}_0 + \frac{\alpha^{-(j_*+1)} -1 }
{\alpha^{-1} -1 } (\tilde{t}_1- \tilde{t}_0)$. By the assumption of the $\kappa$-non-collapsing of $g(t)$,
we have
\[
\operatorname{Vol}_{g(\bar{t}_*)} B_{g(\bar{t}_*)} (\varphi^{-j_*}(p_*),  \alpha ^{j_*
/2}r_*)  \geq  \kappa ( \alpha ^{j_* /2}r_*)^n,
\]
hence 
\[
\operatorname{Vol}_{G(t_*) }B_{G(t_*)} (p_*, r_*) \geq  \kappa r_*^n.
\]
This proves that $G(t)$ is  $\kappa$-non-collapsed on all scales. 
Now $($iii$)$  is proved.
\end{proof}

%%%%%%%%%%%%%%%%%%%%%%%%%%%%%%%%%%%%%%%%%%%%%%
%%%%%%%%%%%%%%%%%%%%%%%%%%%%%%%%%%%%%%%%%%%%%%
\section{\bf  Proof of main theorems}

In this section we adopt the notations used in Lemma \ref{lem G type I}.
Now we give the first new proof of  Perelman's theorem that on closed manifolds
shrinking breathers are GRS.

\begin{theorem} \label{thm main shrinking T}
Let $(M^n, g(t))$ be a nonflat complete shrinking breather.   
Assume one of the following,

\vskip .1cm
\noindent $($i$)$ $M$ is closed; or

\vskip .1cm
\noindent $($ii$)$ $M$ is noncompact  with bounded curvature 
and there are a strictly increasing subsequence $\{ j_k \}$ of natural numbers
and a sequence $\{ {p}_{j_k} \}$ of points  in $M$ 
such that $($iia$)$  ${p}_{j_k}$ converges to some point ${p}_{\infty}$,
$($iib$)$ $ |\operatorname{Rm} |_{g} (\varphi^{j_k -1}( {p}_{j_k}), \tilde{t}_1) 
\geq c_1$ for  some constant $c_1 >0$,  
 and $($iic$)$ $\varphi^{j_k -1} ({p}_{\infty})$ converges to a point ${p}^{\prime}_{\infty} 
 \in M$;
 
\vskip .1cm
\noindent then $(M, g(t))$ is a shrinking GRS.
\end{theorem}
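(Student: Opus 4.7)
The plan is to apply Lemma~\ref{lem extend sol of RF}(i) to extend the shrinking breather $g$ to an ancient solution $G(t)$ on $(-\infty,T)$, invoke Lemma~\ref{lem G type I}(i) to see that $G$ develops a Type~I singularity at $T=\tilde{t}_0+(\tilde{t}_1-\tilde{t}_0)/(1-\alpha)$, and then appeal to the Naber~\cite{Na10}/Enders--M\"uller--Topping~\cite{EMT11} theorem that a Type~I singularity model is a shrinking GRS. The crucial observation is that by construction $G$ is self-similar under the parabolic rescaling induced by $\varphi$, so the Type~I blowup at $T$ agrees (up to pullback by $\varphi^{j}$) with the flow $g$ itself, which will force $g$ to satisfy the shrinker equation.

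Setting $\tau_0:=(\tilde{t}_1-\tilde{t}_0)/(1-\alpha)$, so that $T-\tilde{t}_j=\alpha^j\tau_0$, I would form the Type~I rescalings
\[
\hat{G}_j(s):=(T-\tilde{t}_j)^{-1}G\bigl(T+(T-\tilde{t}_j)\,s\bigr),\qquad s<0.
\]
A direct computation from the explicit formula for $G_j$ in the proof of Lemma~\ref{lem extend sol of RF} gives, for each integer $k\ge 1$ and every $s\in[-\alpha^{-k},-\alpha^{-k+1}]$,
\[
(\varphi^{-(j-1)})^*\hat{G}_j(s) \;=\; (\alpha^k\tau_0)^{-1}(\varphi^{1-k})^*g\bigl(\tau_0(1+\alpha^k s)+\tilde{t}_0\bigr),
\]
a formula that is independent of $j$. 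Thus after pulling back by $\varphi^{j-1}$ the rescaling sequence is eventually constant on every compact subinterval of $(-\infty,0)$.

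For case~(i), the curvature hypothesis in Lemma~\ref{lem G type I}(i) is automatic, and Cheeger--Gromov convergence on the fixed closed $M$ reduces to smooth convergence up to diffeomorphism. The Naber/EMT theorem then produces a subsequential limit which is a shrinking GRS, and the constancy above identifies this limit with $(M,\tau_0^{-1}g(\tilde{t}_0))$ up to diffeomorphism. Hence $g(\tilde{t}_0)$ satisfies $\operatorname{Ric}+\nabla^2 f=\tfrac{1}{2\tau}g$ for some smooth $f$ and some $\tau>0$, and uniqueness of Ricci flow on closed manifolds forces $g(t)$ to coincide with the associated self-similar flow, i.e.\ a shrinking GRS.

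Case~(ii) follows the same template but in the pointed Cheeger--Gromov setting on the noncompact $M$. Using $g(\tilde{t}_1)=\alpha\varphi^*g(\tilde{t}_0)$, hypothesis~(iib) translates into $|\operatorname{Rm}|_{\hat{G}_{j_k}(-1)}(p_{j_k})\ge \tau_0\alpha c_1>0$, ensuring the limit is nonflat; hypothesis~(iia) pins down the basepoints $p_{j_k}\to p_\infty$ in the rescaled manifolds, and (iic) controls their $\varphi^{j_k-1}$-pushforwards, so that the pointed limit is isometric to a pointed version of $(M,\tau_0^{-1}g(\tilde{t}_0))$. The main obstacle I expect is verifying the injectivity-radius input needed for pointed Cheeger--Gromov compactness on the noncompact $M$; once this is supplied by the bounded curvature assumption together with a local volume lower bound around $p'_\infty$ that can be transported along $\varphi$ using (iic), the argument concludes exactly as in case~(i).
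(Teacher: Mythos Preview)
Your approach is essentially the same as the paper's. Both arguments extend the breather to the ancient solution $G$ via Lemma~\ref{lem extend sol of RF}, invoke Lemma~\ref{lem G type I}(i) for the Type~I bound at $T$, form the parabolic rescalings $\hat G_j(s)=(T-\tilde t_j)^{-1}G(T+(T-\tilde t_j)s)$, and then use the key identity that these rescalings are $(\varphi^{j-1})^*$ of a fixed flow; the paper states this only at $s=-1$ (its equation for $g_j(-1)$), whereas you write it for all $s$, which is a mild strengthening but not a different idea.

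Two small points where the paper is slightly more explicit than your write-up. First, in case~(i) the paper pins down an \emph{essential blow-up sequence} in the sense of \cite{EMT11}: it fixes $p_*$ with $|\operatorname{Rm}|_g(p_*,\tilde t_1)\neq 0$, sets $\hat p_j=\varphi^{-j+1}(p_*)$, and checks $|\operatorname{Rm}|_G(\hat p_j,t_j)\ge c/(T-t_j)$, so any subsequential limit $\hat p_\infty$ is a Type~I singular point to which \cite[Thm~1.1]{EMT11} applies directly; your sentence ``the Naber/EMT theorem then produces a subsequential limit'' skips this step, which on a closed manifold is harmless but should be made explicit. Second, in case~(ii) your worry about injectivity radius is not something you need to supply yourself: the $\kappa$-noncollapsing needed for Hamilton--Cheeger--Gromov compactness is already part of the machinery in \cite{EMT11} (via Perelman's reduced volume on complete bounded-curvature flows), so the paper simply cites \cite[Thm~1.1]{EMT11} at the point $p_\infty$ rather than transporting a volume bound along $\varphi$ as you suggest.
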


\begin{proof}
Define 
\[
\lambda_j = \left (T- \left (\tilde{t}_0 +\frac{1-\alpha^{j}}{1-\alpha} (\tilde{t}_1
- \tilde{t}_0) \right )  \right )^{-1} =\alpha ^{-j} (1 -\alpha) (\tilde{t}_1 - \tilde{t}_0 )^{-1}
\rightarrow \infty
\]
 and the rescaled Ricci flow $g_j(t) = \lambda_j G(T + \lambda_j^{-1} t)$. 
 We compute 
 
\begin{equation}
g_j(-1) =\alpha^{-1} (1 -\alpha) (\tilde{t}_1 - \tilde{t}_0 )^{-1} (\varphi^{j-1})^*g(\tilde{t}_1)  
\label{eq rescaled metric g t2}
\end{equation}
and for any $x \in M$
\begin{equation}
 \lambda_j |\operatorname{Rm}|_{g_j}(x,  -1) = |\operatorname{Rm}|_{G}(x, T-\lambda_j^{-1})
= \alpha ^{-j+1} |\operatorname{Rm}|_{g}( \varphi^{j- 1}(x), \tilde{t}_1).
\label{eq rescaled metric curv}
\end{equation}

\vskip .1cm
\noindent  (i) When $M$ is closed, fix $p_* \in M$ where 
$ |\operatorname{Rm}|_{g}(p_*,\tilde{t}_1) \neq 0$,  let $\hat{p}_j =\varphi^{-j+1}(p_*)$ and
$t_j =T- \lambda_j^{-1}$ for $j \geq 2$.
It follows from (\ref{eq rescaled metric curv}) that 
\[
 |\operatorname{Rm}|_{G}(\hat{p}_j, t_j) =\frac{ \alpha (1 -\alpha)^{-1} (\tilde{t}_1 -
 \tilde{t}_0) |\operatorname{Rm}|_{g}
 ( p_*,\tilde{t}_1)}{ T-t_j}\rightarrow  \infty \quad \text{  as } j \rightarrow \infty.
 \]
 Hence $(\hat{p}_j, t_j)$ is an essential blow-up sequence as defined in 
 \cite[Def. 1.2]{EMT11}. 
 Let $\hat{p}_{\infty}$ be a limit of some subsequence of $\{ \hat{p}_j \}$,
 then $\hat{p}_{\infty}$  a Type I singular point as in \cite[Def. 1.2]{EMT11}.
We apply  \cite[Thm 1.1]{EMT11} to $(M, g_j(t), \hat{p}_{\infty})$ 
to conclude that the sequence subconverges 
in Cheeger-Gromov sense  to a normalized nontrivial shrinking  GRS in canonical form. 
It follows from (\ref{eq rescaled metric g t2}) that 
the subsequence of $\{ (M, g_j(-1), \hat{p}_{\infty}) \}$ has the same limit as
the corresponding subsequence $(M, \alpha^{-1} (1 -\alpha) (\tilde{t}_1 -
\tilde{t}_0 )^{-1} g(\tilde{t}_1), \varphi^{j-1}(\hat{p}_\infty))$.

Since $M$ is closed, the subsequence of  $\{ \varphi^{j-1}(\hat{p}_\infty) \}$ subconverges to some
$\tilde{p}_{\infty} \in M$. Hence $(M, \alpha^{-1} (1 -\alpha) (\tilde{t}_1 -
\tilde{t}_0)^{-1} g(\tilde{t}_1), \varphi^{j-1}(\hat{p}_\infty))$ 
subconverges to  $(M, \alpha^{-1} (1 -\alpha) (\tilde{t}_1 - \tilde{t}_0)^{-1} g(\tilde{t}_1), 
\tilde{p}_\infty)$,
it follows from the uniqueness of Cheeger-Gromov limit that $ g(\tilde{t}_1)$ is a shrinking  GRS.

\vskip .1cm
\noindent  (ii) When $M$ is noncompact, 
it follows from (\ref{eq rescaled metric curv}) and the assumption (iia) that 
\[
 |\operatorname{Rm}|_{G}({p}_{j_k}, t_{j_k}) =\frac{ \alpha (1 -\alpha)^{-1} (\tilde{t}_1 -
 \tilde{t}_0) |\operatorname{Rm}
 |_{g} (\varphi^{j_k - 1}( {p}_{j_k}), \tilde{t}_1)}{ T-t_{j_k}} \geq \frac{c_2}{ T-t_{j_k}},
 \]
where $c_2 = \alpha (1 -\alpha)^{-1} (\tilde{t}_1 - \tilde{t}_0) c_1$.
Hence $({p}_{j_k},t_{j_k})$ is an essential blow-up sequence and $p_{\infty}$ is a 
Type I singular point by  the assumption (iib). 
Applying  \cite[Thm 1.1]{EMT11} to $(M, g_{j_k}(t), p_{\infty})$ we conclude that the sequence 
subconverges in Cheeger-Gromov sense  to a normalized nontrivial shrinking  GRS in canonical form. 
It follows from (\ref{eq rescaled metric g t2}) that the subsequence of $\{ (M, g_{j_k}(-1), 
p_{\infty}) \}$ 
has the same limit as the corresponding subsequence of $\{ (M, \alpha^{-1} 
(1 -\alpha) (\tilde{t}_1 - \tilde{t}_0)^{-1} g(\tilde{t}_1), \varphi^{j_k-1}(p_\infty)) \}$.

By the assumption (iic) $(M, \alpha^{-1} (1 -\alpha) (\tilde{t}_1 - \tilde{t}_0
)^{-1} g(\tilde{t}_1), \varphi^{j_k-1}(p_\infty))$ 
subconverges to  $(M, \alpha^{-1} (1 -\alpha) (\tilde{t}_1 - \tilde{t}_0)^{-1} g(\tilde{t}_1), 
{p}^{\prime}_\infty)$.
By the uniqueness of Cheeger-Gromov limit  $ g(\tilde{t}_1)$ is a  shrinking  GRS. 
\end{proof}

\begin{remark}
Suppose that there is a $p_* \in M$ such that  $ |\operatorname{Rm} |_{g} 
(p_*, \tilde{t}_1) \geq c_1$. We assume that $\varphi^{- j_k + 1}( p_*)$ converges 
to some $p_{\infty} \in M$   for some subsequence $j_k$ 
and that $\varphi^{j_k -1}( p_{\infty})$ subconverges to some 
${p}^{\prime}_{\infty} \in M$.
Then the  conditions in Theorem \ref{thm main shrinking T}(ii) are satisfied  
by taking $p_{j_k} = \varphi^{- j_k + 1}(p_*)$.
\end{remark}

Below we give the second new proof of the Perelman's theorem.

\begin{theorem} \label{thm main shrinking -infty}
Let $(M^n, g(t))$ be a nonflat complete shrinking breather. 
Assume one of the following,

\vskip .1cm
\noindent $($i$)$ $M$ is closed; or

\vskip .1cm
\noindent $($ii$)$ $M$ is noncompact  with bounded curvature
 and we assume that $(M, g(t))$, $t \in [\tilde{t}_0, \tilde{t}_1]$, is 
 $\kappa$-non-collapsed on all scale and that for the sequence of points $(q_k, {t}_k)$ in 
 \cite[Thm 4.1]{CZ11}, the sequence  $\varphi^{-j_k}(q_k)$ subconverges to some point $\hat{q}_{\infty}
 \in M$ where $j_k$ is defined by (\ref{eq jk def}) below;

\vskip .1cm
\noindent  then $(M, g(t))$ is a shrinking GRS.
\end{theorem}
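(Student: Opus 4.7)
The approach parallels the proof of Theorem \ref{thm main shrinking T} but substitutes the Cao-Zhang rescaled-limit result \cite[Thm 4.1]{CZ11} for $\kappa$-non-collapsed Type I ancient solutions at $-\infty$ in place of the Enders-Müller-Topping finite-time singularity theorem. First I would apply Lemma \ref{lem extend sol of RF}(i) to extend $g(t)$ to an ancient solution $G(t)$ on $(-\infty, T)$. In case (i) closedness of $M$ automatically gives bounded curvature on $[\tilde{t}_0, \tilde{t}_1]$ and, via Perelman's no-local-collapsing theorem applied on this compact subinterval, a $\kappa$-non-collapsing constant on all scales for some $\kappa > 0$; in case (ii) both are assumed directly. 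Lemma \ref{lem G type I}(ii)-(iii) then upgrade $G(t)$ to a $\kappa$-non-collapsed Type I ancient solution on $(-\infty, \tilde{t}_0]$.

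Next, I would apply \cite[Thm 4.1]{CZ11} to $G(t)$ to obtain the sequence $(q_k, t_k) \in M \times (-\infty, \tilde{t}_0]$ with $t_k \to -\infty$ such that the rescaled Ricci flows $(M, |t_k|^{-1} G(t_k + |t_k| t), q_k)$ subconverge in the pointed Cheeger-Gromov sense to a nontrivial shrinking GRS in canonical form. Define $j_k \geq 1$ by the rule $t_k \in (\tilde{t}_{-j_k}, \tilde{t}_{-j_k+1}]$; this is the equation (\ref{eq jk def}) referenced in the theorem statement. The explicit formula for $G_{-j_k}$ from Lemma \ref{lem extend sol of RF} exhibits $G(t_k) = \alpha^{-j_k}(\varphi^{-j_k})^* g(s_k)$ for a unique $s_k \in [\tilde{t}_0, \tilde{t}_1]$. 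Pulling back by $\varphi^{j_k}$, the time-slice $(M, |t_k|^{-1} G(t_k), q_k)$ is isometric to $(M, \tilde{\mu}_k\, g(s_k), \varphi^{-j_k}(q_k))$ with $\tilde{\mu}_k := |t_k|^{-1} \alpha^{-j_k}$. Using $|\tilde{t}_{-j}| \sim \alpha^{-j}(\tilde{t}_1 - \tilde{t}_0)/(1-\alpha)$, the scalars $\tilde{\mu}_k$ are bounded above and below, and on a further subsequence $\tilde{\mu}_k \to \tilde{\mu}_\infty > 0$ and $s_k \to \tilde{t}_* \in [\tilde{t}_0, \tilde{t}_1]$.

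By compactness in case (i) or by hypothesis in case (ii), $\varphi^{-j_k}(q_k)$ subconverges to some $\hat{q}_\infty \in M$. Hence $(M, \tilde{\mu}_k\, g(s_k), \varphi^{-j_k}(q_k))$ subconverges in the pointed Cheeger-Gromov sense to $(M, \tilde{\mu}_\infty\, g(\tilde{t}_*), \hat{q}_\infty)$. Uniqueness of the Cheeger-Gromov limit, applied at the rescaled time slice corresponding to $t = -1$ in the Cao-Zhang limit, identifies $(M, \tilde{\mu}_\infty\, g(\tilde{t}_*))$ with a time slice of the nontrivial shrinking GRS. Since the Ricci flow of a shrinking GRS metric is self-similar and Ricci flow is unique (trivially in the closed case, and in the noncompact case from the bounded-curvature hypothesis via Chen-Zhu), this forces the original $(M, g(t))$ to be a shrinking GRS.

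The main obstacle is the compatibility of two a priori distinct notions of convergence: the parabolic-blowup Cheeger-Gromov convergence of the full rescaled ancient solution produced by \cite[Thm 4.1]{CZ11}, versus the time-slice convergence of the pulled-back metrics $\tilde{\mu}_k\, g(s_k)$. One must track two sources of drift — the time parameter $s_k$ moving inside $[\tilde{t}_0, \tilde{t}_1]$ and the spatial base point $\varphi^{-j_k}(q_k)$ moving inside $M$ — and verify they are compatible with the two limit processes. The time drift is harmless because $[\tilde{t}_0, \tilde{t}_1]$ is compact; the hypothesis in case (ii) is designed precisely to control the spatial drift that case (i) handles for free by compactness of $M$.
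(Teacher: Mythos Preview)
Your proposal is correct and follows essentially the same route as the paper: extend to the ancient solution $G(t)$, invoke Lemma~\ref{lem G type I}(ii)--(iii) to get a $\kappa$-non-collapsed Type~I ancient solution, apply \cite[Thm~4.1]{CZ11}, unwind $G(t_k)$ via the explicit formula for $G_{-j_k}$ to identify the rescaled time slice with $(M,\tilde{\mu}_k\,g(s_k),\varphi^{-j_k}(q_k))$, and conclude by uniqueness of the Cheeger--Gromov limit once the base points subconverge. The only cosmetic slip is that the slice you actually computed, $|t_k|^{-1}G(t_k)$, sits at rescaled time $t=0$ rather than $t=-1$; this does not affect the argument.
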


\begin{proof}
Note that when $M$ is closed,  nonflat solution $(M, g(t))$, $t \in [\tilde{t}_0, \tilde{t}_1]$, 
is  $\kappa$-non-collapsed on all scales for some small $\kappa >0$. 
Hence  it follows from Lemma \ref{lem G type I} that in either case $($i$)$ or $($ii$)$  solution
$(M, G(t))$ is nonflat type I ancient solution
and is $\kappa$-non-collapsed  on all scales. By \cite[Thm 4.1]{CZ11} there is a sequence of points 
$(q_k, {t}_k)$ with ${t}_k \rightarrow - \infty$ such that  $(M,g_k(t),q_k)$
converges to a nonflat shrinking GRS $(M_{\infty}, g_{\infty}(t), q_{\infty})$.
Here ${g}_k(t) \doteqdot |{t}_k|^{-1} G({t}_k +s| {t}_k|)$.
 Choose $j_k$ so that 
 \begin{equation}
 {t}_k \in ( \tilde{t}_{-j_k},  \tilde{t}_{-j_k+1}], \label{eq jk def}
\end{equation}
then 
\[
 g_k(0) = |{t}_k|^{-1} G( t_k ) = | {t}_k|^{-1} \alpha ^{-j_k} (\varphi^{-j_k})^*
 g( \bar{t}_k) 
\]
where  
\begin{equation}
\bar{t}_k =\alpha^{j_k} \left ( {t}_k -  \tilde{t}_1 + \frac{\alpha^{-(j_k+1)} -1 }
{\alpha^{-1} -1 } (\tilde{t}_1- \tilde{t}_0) \right )+\tilde{t}_0 \in ( \tilde{t}_0,\tilde{t}_1].
\label{eq t k converg}
\end{equation}
It follows from (\ref{eq t k converg}) that
$\bar{t}_k $ subconverges to some $\bar{t}_{\infty} \in [\tilde{t}_0,\tilde{t}_1]$ and
 that $ |{t}_k|^{-1} \alpha ^{-j_k}$ subconverges to a number $c_1 >0$.
Note that  to prove the theorem it suffices to prove the subconvergence of  
$\{ (M, g(\bar{t}_k), \varphi^{-j_k}(q_k)) \}$.

\vskip .1cm
\noindent  $($i$)$ When $M$ is closed,  then $\varphi^{-j_k}(q_k)$ subconverges to $ \bar{q}_{\infty}
\in M $.
We conclude that 
$(M, g(\bar{t}_k), \varphi^{-j_k}(q_k))$ subconveregs to $(M, g(\bar{t}_{\infty}), \bar{q}_{\infty})$.
By the uniqueness of Cheeger-Gromov limit we have that $(M_{\infty}, g_{\infty}(0), q_{\infty})$ 
is isometric to  $(M, c_1 g(\bar{t}_{\infty}), \bar{q}_{\infty})$.
Hence $ g(\bar{t}_{\infty})$ is a nonflat shrinking  GRS.

\vskip .1cm
\noindent  $($ii$)$ When $M$ is noncompact,   by the assumption that $\varphi^{-j_k}(q_k) $ 
subconverges to some $\hat{q}_{\infty}$, we can repeat the proof of $($i$)$
to conclude that   $ g(\bar{t}_{\infty})$ is a  shrinking  GRS.
\end{proof}

%%%%
\vskip .2cm
G. Perelman proves that steady breathers on  closed  manifolds are steady GRS (actually Einstein), 
the following result is 
of interest when $M$ is  noncompact.

\begin{proposition}\label{prop steady noncompact}
Let $(M^n, g(t)), \, t \in [\tilde{t}_0, \tilde{t}_1]$, 
be a nonflat complete steady breather with bounded curvature. 
Assume that  the solution $g(t)$ has positive curvature operator 
 and that  the scalar curvature $R(\cdot , t)$ attains its supremum over space for each $t$.
Then $g(t)$ is a steady GRS.
\end{proposition}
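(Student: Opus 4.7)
The plan is to extend $(M, g(t))$ to an eternal solution via Lemma \ref{lem extend sol of RF}(ii) and then apply Hamilton's classification of eternal Ricci flows with bounded nonnegative curvature operator whose scalar curvature attains its space-time supremum. Since $\alpha = 1$, there is no rescaling in the construction of $G(t)$, $t \in (-\infty, \infty)$, so $G$ inherits bounded curvature and positive curvature operator directly from $g$ via the iterated pullbacks; in particular the Ricci curvature of $G$ is positive at every space-time point. The key relation built into $G$ is the $\varphi^{\ast}$-equivariance $G(t + \tau) = \varphi^{\ast} G(t)$ for all $t$, where $\tau = \tilde{t}_1 - \tilde{t}_0$.

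I would next promote the hypothesis that $R(\cdot, t)$ attains its spatial supremum (assumed for $t \in [\tilde{t}_0, \tilde{t}_1]$) to the statement that $R_G$ attains a genuine space-time supremum on $M \times \mathbb{R}$. Iterating the $\varphi^{\ast}$-equivariance transfers pointwise-in-time spatial attainment from $[\tilde{t}_0, \tilde{t}_1]$ to all of $\mathbb{R}$, and at the same time makes the function $t \mapsto \sup_{x \in M} R_G(x,t)$ periodic with period $\tau$. On the other hand, the evolution equation $\partial_t R = \Delta R + 2|\operatorname{Ric}|^2$, together with the bounded-curvature maximum principle, forces this spatial supremum to be non-decreasing in $t$; combined with periodicity, it must be a constant $R_{\max}$. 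Picking any $t_0 \in [\tilde{t}_0, \tilde{t}_1]$ and a spatial maximizer $x_0$ at time $t_0$, we have $R_G(x_0, t_0) = R_{\max}$, so $R_G$ is realized in space-time.

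With bounded nonnegative curvature operator, positive Ricci curvature, and the space-time maximum of $R_G$ attained at an interior point in hand, Hamilton's theorem on eternal solutions identifies $(M, G(t))$ as a steady gradient Ricci soliton, and restricting back to $t \in [\tilde{t}_0, \tilde{t}_1]$ yields that $g(t)$ is a steady GRS. The main obstacle is the step from pointwise-in-time spatial attainment of $\sup R$ to genuine space-time attainment for the eternal extension; this is precisely what the breather periodicity provided by Lemma \ref{lem extend sol of RF}(ii) enables, through the interplay of $\varphi^{\ast}$-equivariance with the scalar curvature maximum principle, and it is the reason the conclusion can be drawn even in the noncompact setting.
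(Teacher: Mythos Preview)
Your approach coincides with the paper's: extend to the eternal solution $G(t)$ of Lemma \ref{lem extend sol of RF}(ii) and invoke Hamilton's theorem \cite{Ha93}. You moreover supply the bridge the paper leaves implicit, namely upgrading spatial attainment of $\sup R$ on $[\tilde t_0,\tilde t_1]$ to genuine space--time attainment for $G$.

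One correction is needed in that bridge. The assertion that $t\mapsto \sup_{x\in M} R_G(x,t)$ is non-decreasing does \emph{not} follow from the evolution $\partial_t R=\Delta R+2|\operatorname{Ric}|^2$ together with the bounded-curvature maximum principle: that combination controls $\inf_x R$ from below, not $\sup_x R$. The correct justification is Hamilton's trace Harnack inequality for complete ancient solutions with bounded nonnegative curvature operator, which on the eternal extension $G$ gives $\partial_t R_G\geq 0$ pointwise; together with the $\tau$-periodicity of $\sup_x R_G(x,\cdot)$ coming from $G(t+\tau)=\varphi^{*}G(t)$, this forces $\sup_x R_G(x,t)\equiv R_{\max}$ and your argument then goes through.
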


\begin{proof}
The assumptions enable us to apply \cite[Main Thm]{Ha93} to conclude that $(M, G(t))$  
in Lemma \ref{lem extend sol of RF}(i) is a steady GRS.
\end{proof}

%%%%%%%%%%%%%%%%%%%%%%%%%%%%%%%%%%%%%%%%%%%%%%
%%%%%%%%%%%%%%%%%%%%%%%%%%%%%%%%%%%%%%%%%%%%%%
%%%%%%%%%%%%%%%%%%%%%%%%%%%%%%%%%%%%%%%%%%%%%%

\end{document}